\documentclass[preprint,10pt]{elsarticle}

\usepackage{amsmath,amssymb,amsthm}
\usepackage[colorlinks=true,linkcolor=blue,citecolor=blue,urlcolor=blue,hypertexnames=false]{hyperref}

\makeatletter
\@removefromreset{equation}{section}
\makeatother

\journal{Fuzzy Sets and Systems}

\newtheorem{theorem}{Theorem}[section]

\theoremstyle{definition}
\newtheorem{definition}[theorem]{Definition}
\newtheorem{counterexample}[theorem]{Counterexample}
\theoremstyle{remark}
\newtheorem{remark}[theorem]{Remark}

\newcommand{\Nzero}{\mathbb{N}_0}
\newcommand{\eps}{\varepsilon}

\begin{document}

\begin{frontmatter}

\title{On fuzzy contractions in fuzzy metric spaces}

\author[addr1]{Fengshuo Xu}
\author[addr1]{Haitao Ji}
\author[addr1]{Dong Qiu\corref{cor1}}

\cortext[cor1]{Corresponding author. E-mail: qiudong@gxu.edu.cn (D. Qiu).}

\address[addr1]{School of Mathematics,\\
Center for Applied Mathematics of Guangxi (Guangxi University),\\
Guangxi University, Nanning, Guangxi, 530004, P. R. China}

\begin{abstract}
The classical Banach principle relies on contractive sequences being Cauchy. Gregori and Sapena (2002) established a fuzzy analogue but required this as an extra hypothesis, leaving it as an open problem. Gregori et al. (2020) later asked whether strictly fuzzy $\psi$-contractive sequences are necessarily Cauchy. We settle both problems negatively via a single counterexample: in a constructed GV-fuzzy metric space, there exists a sequence which is both GS-contractive and strictly fuzzy $\psi$-contractive, yet not Cauchy. This shows that contractivity never implies Cauchyness in general GV-fuzzy metric spaces, and additional structural conditions in fuzzy fixed point theorems are indispensable.
\end{abstract}

\begin{keyword}
Fuzzy metric space \sep Contractive sequences \sep Cauchy sequences
\end{keyword}

\end{frontmatter}
\medskip
\noindent\textit{2020 Mathematics Subject Classification:} 54H25; 54A40; 47H10.

\section{Introduction}

The Banach fixed point theorem, established in 1922 by Banach \cite{Banach1922}, is one of the cornerstones of modern analysis. It asserts that every contraction mapping on a complete metric space possesses a unique fixed point, and the sequence of iterates converges to this point from any initial value. The classical proof relies on a fundamental fact: every contractive sequence in a metric space is necessarily a Cauchy sequence. This implication is a structural inevitability rooted in the rigid order structure of the real numbers and the additive accumulation enforced by the triangle inequality.

The extension of this theory to fuzzy metric spaces began with Grabiec \cite{Grabiec1988}, who initiated the study of fixed points in Kramosil--Michalek fuzzy metric spaces. George and Veeramani \cite{GV1994} later modified this notion, introducing a Hausdorff topology and the concept of M-Cauchy sequences, which has become the dominant framework. In 2002, Gregori and Sapena \cite{GS2002} introduced fuzzy contractive mappings in the sense of George and Veeramani and established a corresponding fuzzy Banach fixed point theorem. However, in contrast to the classical setting, their proof required the additional hypothesis that fuzzy contractive sequences are Cauchy sequences. This led them to pose the following natural question, which we refer to as Problem 1: in a fuzzy metric space in the sense of George and Veeramani, is every GS-fuzzy contractive sequence necessarily a Cauchy sequence? It is known that in standard fuzzy metric spaces induced by ordinary metrics, the answer is affirmative, which fostered an optimistic expectation that the same might hold in general GV-fuzzy metric spaces.

Over the past two decades, numerous authors have investigated related contractive notions. Wardowski \cite{Wardowski2013} introduced fuzzy H-contractive mappings using control functions. Romaguera and Tirado \cite{RT2009} studied RT-contractive mappings in intuitionistic fuzzy quasi-metric spaces. In the Kramosil--Michalek framework, Mihe\c{t} \cite{Mihet2008} introduced fuzzy $\psi$-contractive mappings and proved that every such sequence is M-Cauchy in non-Archimedean fuzzy metric spaces under a positive t-norm, providing a partial affirmative answer. Gregori and Mi\~{n}ana \cite{GM2016} subsequently attempted to answer the question directly in George--Veeramani spaces, claiming that fuzzy $\psi$-contractive sequences must be Cauchy under conditions such as $\bigwedge_{t>0}M(x,y,t)>0$ or the strong fuzzy metric property. However, Gregori, Mi\~{n}ana and Miravet \cite{GMM2020} discovered a proof error in a core lemma of that work and constructed a counterexample of a fuzzy $\psi$-contractive sequence that is not Cauchy. Unfortunately, their counterexample belongs to the weaker $\psi$-contractive class and does not satisfy the original GS-contractive condition of Gregori and Sapena, leaving Problem 1 unresolved. To remedy the situation, Gregori et al. \cite{GMM2020} introduced the concept of strictly fuzzy contractive sequences, where the contractive condition is required to hold for all index pairs simultaneously rather than merely for adjacent terms. At the end of their paper, they posed a new open problem, which we refer to as Problem 2: is every strictly fuzzy $\psi$-contractive sequence necessarily a Cauchy sequence?

The present paper reveals a fundamental difference between classical metric spaces and GV-fuzzy metric spaces. In classical metric spaces, the fact that a sequence is contractive implies that it is Cauchy because the triangle inequality allows the control of adjacent distances to accumulate additively into a bound for arbitrary pairs. In GV-fuzzy metric spaces, the value $M(x,y,t)$ lies in the unit interval and the order structure is softened by the continuous t-norm. The triangle inequality in this setting, namely $M(x,z,t+s)\ge M(x,y,t)*M(y,z,s)$, is a pointwise probabilistic estimate rather than a linear accumulation of distances, reflecting the heritage of probabilistic metric spaces \cite{SS1983,SBR1972}. Consequently, the control of adjacent terms does not propagate to arbitrary terms in the manner required for Cauchyness. We shall prove that this propagation failure persists even under the most restrictive contractive condition considered in the literature, namely the strictly fuzzy $\psi$-contractive condition.

The main contribution of this paper is the construction of a single counterexample that simultaneously settles both long-standing problems. We construct a GV-fuzzy metric space $(\Nzero,M,*)$ and a sequence $\{x_n=n\}$ in it such that the sequence satisfies the original GS-fuzzy contractive condition, satisfies the more restrictive strictly fuzzy $\psi$-contractive condition, yet is not a Cauchy sequence in the sense of George and Veeramani. This result shows that the implication from contractivity to Cauchyness, which is automatic in classical metric spaces, is universally invalid in GV-fuzzy metric spaces and cannot be restored by simply strengthening the contractive hypothesis. Therefore, additional conditions such as non-Archimedean structure, the lower bound condition $\bigwedge_{t>0}M(x,y,t)>0$, or G-completeness are not merely sufficient but are necessary and indispensable in fuzzy Banach-type fixed point theorems. Our results completely clarify the boundary of what can be expected from contractivity alone in this setting.

\section{Preliminaries}

In this section we recall the basic notions and fix the notation. For a comprehensive treatment of probabilistic metric spaces we refer to Schweizer and Sklar \cite{SS1983}. Throughout the paper, we write $\Nzero=\{0,1,2,\ldots\}$; $M^{-1}$ always denotes the numerical reciprocal $1/M$.

\begin{definition}[George--Veeramani \cite{GV1994}] \label{def:gv}
Let $X$ be a nonempty set, $*$ a continuous $t$-norm, and $M$ a fuzzy set on $X^2\times(0,+\infty)$. If for any $x,y,z\in X$ and $s,t>0$, the following are satisfied:
\begin{enumerate}
    \item[(GV1)] $M(x,y,t)>0$;
    \item[(GV2)] $M(x,y,t)=1$ if and only if $x=y$;
    \item[(GV3)] $M(x,y,t)=M(y,x,t)$;
    \item[(GV4)] $M(x,y,t)*M(y,z,s)\leq M(x,z,t+s)$;
    \item[(GV5)] $M(x,y,\cdot):(0,+\infty)\to(0,1]$ is continuous,
\end{enumerate}
then $(X,M,*)$ is called a fuzzy metric space (GV-fuzzy metric space).
\end{definition}

\begin{definition}[Gregori--Romaguera \cite{GR2000}] \label{def:stationary}
A fuzzy metric $M$ on $X$ is called \textit{stationary} if for each $x,y\in X$, the function $M(x,y,\cdot)$ is constant. In this case we write $M(x,y)$ instead of $M(x,y,t)$.
\end{definition}

\begin{definition}[Gregori--Romaguera \cite{GR2000}] \label{def:strong}
A fuzzy metric space $(X,M,*)$ is said to be \textit{strong}, or \textit{non-Archimedean}, if it satisfies
\[
M(x,z,t)\ge M(x,y,t)*M(y,z,t)
\]
for all $x,y,z\in X$ and all $t>0$.
\end{definition}

\begin{definition}[Gregori--Sapena \cite{GS2002}] \label{def:gs}
Let $(X,M,*)$ be a fuzzy metric space and $\{x_n\}$ a sequence in $X$. If there exists $k\in(0,1)$ such that for all $n$ and $t>0$,
\[
{M(x_{n+1},x_{n+2},t)}^{-1}-1
\leq k\left({M(x_n,x_{n+1},t)}^{-1}-1\right),
\]
then $\{x_n\}$ is called a fuzzy contractive sequence (GS-contractive sequence), and $k$ is called the contractive constant.
\end{definition}

\begin{definition}[Gregori--Mi\~{n}ana--Miravet \cite{GMM2020}] \label{def:strictly-psi}
Let $\Psi$ denote the class of all continuous and nondecreasing mappings $\psi:(0,1]\to(0,1]$ satisfying $\psi(u)>u$ for $0<u<1$. Let $\{x_n\}$ be a sequence in $X$. If for some $\psi\in\Psi$,
\[
M(x_{m+1},x_{n+1},t)\geq \psi\bigl(M(x_m,x_n,t)\bigr)
\]
holds for all $m,n\in\mathbb{N}$ and $t>0$, then $\{x_n\}$ is called a strictly fuzzy $\psi$-contractive sequence.
\end{definition}

\begin{definition}[George--Veeramani \cite{GV1994}] \label{def:cauchy}
Let $(X,M,*)$ be a fuzzy metric space and $\{x_n\}$ a sequence in $X$. If for any $\eps\in(0,1)$ and $t>0$, there exists $N$ such that $M(x_n,x_m,t)>1-\eps$ whenever $m,n\ge N$, then $\{x_n\}$ is called a Cauchy sequence.
\end{definition}

\section{Main results}

First, we construct a special fuzzy metric space.

\begin{definition} \label{def:construction}
For $x,y\in\Nzero$, write
\[
 \ell(x,y)=\min\{x,y\},\qquad h(x,y)=\max\{x,y\}.
\]
For $t>0$, define
\[
 \mu_t(x,y)=\frac{|x-y|}{2^t-1}-\ell(x,y),
\]
and further define
\begin{equation} \label{eq:M}
 q_t(x,y)=
 \begin{cases}
 0, & x=y,\\[2mm]
 2^{\mu_t(x,y)-t}, & x\neq y,
 \end{cases}
 \qquad
 M(x,y,t)=\frac{1}{1+q_t(x,y)}.
\end{equation}
\end{definition}

\begin{theorem} \label{thm:fuzzy}
Taking $a*b=\min\{a,b\}$, then $(\Nzero,M,*)$ is a GV-fuzzy metric space.
\end{theorem}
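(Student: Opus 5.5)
The plan is to check (GV1)--(GV5) directly. All the work is in (GV4), which for $*=\min$ turns into a one-line inequality in the exponents. (GV1), (GV2), (GV3) are immediate: $q_t\ge 0$ is finite, so $M\in(0,1]$; $q_t(x,y)=0$ iff $x=y$, since $2^{(\cdot)}>0$; and $\mu_t$, $\ell$, $|x-y|$ are symmetric. For (GV5), $t\mapsto \mu_t(x,y)-t$ is continuous on $(0,+\infty)$ because $2^t-1>0$ there, so $M(x,y,\cdot)$ is continuous. It is also nonincreasing in $q$, and $q_t(x,y)$ is nonincreasing in $t$ because both $|x-y|/(2^t-1)$ and $-t$ decrease in $t$.

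For (GV4), since $M=1/(1+q)$ is decreasing in $q$ and $*=\min$, the axiom is equivalent to
\[
q_{t+s}(x,z)\le \max\{q_t(x,y),\,q_s(y,z)\}\qquad (x,y,z\in\Nzero,\ s,t>0).
\]
The degenerate cases are handled first. If $x=z$ there is nothing to prove. If $y=x$ (resp.\ $y=z$), the claim reduces to $q_{t+s}(x,z)\le q_s(x,z)$ (resp.\ $\le q_t(x,z)$), which is the monotonicity in $t$ just noted. So assume $x,y,z$ are pairwise distinct. Taking $\log_2$ and writing $a=2^t-1$, $b=2^s-1$, $c=2^{t+s}-1=a+b+ab$, we must show
\[
\frac{|x-z|}{c}-\ell(x,z)-t-s\le \max\Bigl\{\frac{|x-y|}{a}-\ell(x,y)-t,\ \frac{|y-z|}{b}-\ell(y,z)-s\Bigr\}.
\]
We drop the $-t-s$ on the left against $-t$ or $-s$ on the right; this only costs slack. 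We also use $c\ge a+b$ and the mediant bound $(u+v)/(a+b)\le\max\{u/a,v/b\}$.

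We split by the position of $y$ relative to $x$ and $z$. By symmetry we may take $x<z$.

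\emph{Case $y<x$ or $y>z$.} Here $\ell(x,y)$ or $\ell(y,z)$ is at most $\ell(x,z)=x$, on the side whose term survives. In more detail: if $y<x$, then both $\ell(x,y)$ and $\ell(y,z)$ equal $y<x$, and the triangle inequality plus the mediant bound finishes. If $y>z$, then $|x-z|\le|x-y|$ and $\ell(x,y)=x$, so the first term on the right already dominates.

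\emph{Case $x<y<z$.} This is the main obstacle, because $\ell(y,z)=y>x=\ell(x,z)$ carries a penalty. Put $d_1=y-x$ and $d_2=z-y$. If $d_1/a\ge (d_1+d_2)/c$, the first term on the right works, since $\ell(x,y)=x$. Otherwise $d_1c<a(d_1+d_2)$, i.e.\ $d_1 b(1+a)<a d_2$, and we must prove
\[
\frac{d_1+d_2}{c}+d_1\le \frac{d_2}{b}.
\]
Using $d_1<a(d_1+d_2)/c$, it suffices to show $(d_1+d_2)(1+a)/c\le d_2/b$. Since $c-b(1+a)=a$, this is equivalent to $b(1+a)d_1\le a d_2$, which is exactly the case hypothesis. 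So the $ab$ term in $c$ is what absorbs the penalty $\ell(y,z)-\ell(x,z)=d_1$.

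With all cases covered, (GV4) holds, and $(\Nzero,M,*)$ is a GV-fuzzy metric space.
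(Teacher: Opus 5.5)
Your proof is correct. Its outer structure matches the paper's. You reduce (GV4) to $q_{t+s}(x,z)\le\max\{q_t(x,y),q_s(y,z)\}$, dispose of the degenerate cases by monotonicity in $t$, and discard $-(t+s)$ against $-t$ or $-s$. Everything then rests on $\mu_{t+s}(x,z)\le\max\{\mu_t(x,y),\mu_s(y,z)\}$, and the two arguments differ only in how this core inequality is proved.

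The paper rewrites $\mu_t(x,y)\le c$ as $h(x,y)+c\le 2^t\bigl(\ell(x,y)+c\bigr)$, i.e.\ the shifted points $x+c$ and $y+c$ lie within a factor $2^t$ of each other. It then takes the common shift $c'=\max\{\mu_t(x,y),\mu_s(y,z)\}$. These factor bounds multiply along $x\to y\to z$ to give the factor $2^{t+s}$ for the pair $(x,z)$, with no case distinction on where $y$ sits.

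You instead split according to the position of $y$ relative to $x<z$. The two outer positions are handled by the mediant bound and $c\ge a+b$. In the only tight configuration, $x<y<z$, you use the exact identity $c-b(1+a)=a$, which is the additive form of $2^{t+s}=2^t2^s$.

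Your route is fully elementary and shows exactly where the cross term $ab$ in $c$ is needed: it absorbs the penalty $\ell(y,z)-\ell(x,z)=y-x$. The paper's route is shorter and case-free, and it exposes the multiplicative (log-ratio) structure behind the max-type triangle inequality for $\mu$.

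One point you could make explicit: reducing to $x<z$ swaps $t$ and $s$ along with $x$ and $z$. This is legitimate because the target inequality is invariant under $(x,z,a,b)\mapsto(z,x,b,a)$, and $c=a+b+ab$ is symmetric in $a,b$.
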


\begin{proof}
From \eqref{eq:M} we directly obtain that $M>0$, symmetry and $M(x,y,t)=1$ if and only if $ x=y$. Fixing $x,y$, it is clear that $t\mapsto M(x,y,t)$ is continuous on $(0,\infty)$. It remains to prove the triangle inequality.

Let $x,y\in \Nzero$. For any $t>0$ and any $c\in\mathbb{R}$, multiplying both sides of the inequality $\mu_t(x,y)\le c$ by the positive number $2^t-1$, we  yield the following equivalence:
\begin{equation} \label{eq:mu-equiv}
 \mu_t(x,y)\le c
 \quad\text{ if and only if} \quad
 h(x,y)+c\le 2^t\bigl(\ell(x,y)+c\bigr).
\end{equation}

For any $x,y,z\in\Nzero$ and any positive numbers $t,s$, let $c'=\max\{\mu_t(x,y),\mu_s(y,z)\}.$
Clearly $c'\ge\mu_t(x,y)$. If $x\neq y$, then $$\ell(x,y)+c'\ge\ell(x,y)+\mu_t(x,y)=\frac{|x-y|}{(2^t-1)}>0;$$ if $x=y$, then from $c'\ge\mu_t(x,y)=-\ell(x,y)=-x$ we get $\ell(x,y)+c'= x+c'\ge0$. Therefore,  we have $\ell(x,y)+c'\ge0$ which implies 
\begin{equation} \label{eq:ell-ineq}
 \ell(x,y)+c'\leq 2^t\bigl(\ell(x,y)+c'\bigr)\leq 2^t\bigl(h(x,y)+c'\bigr).
\end{equation}

Now, by \eqref{eq:mu-equiv} and \eqref{eq:ell-ineq} we obtain
\[
 x+c'\le 2^t(y+c'),\qquad y+c'\le 2^t(x+c').
\]
By the same discussion for $y,z,s$, it yields
\[
 y+c'\le 2^s(z+c'),\qquad z+c'\le 2^s(y+c').
\]

Combining these inequalities and using $2^{t+s}=2^t 2^s$, we get
\[
 x+c'\le 2^{t+s}(z+c'),\qquad z+c'\le 2^{t+s}(x+c').
\]
Therefore, whether $x\le z$ or $z\le x$, we have
\[
 h(x,z)+c'\le 2^{t+s}\bigl(\ell(x,z)+c'\bigr).
\]
Hence, for $x,z, t+s, c'$,  by \eqref{eq:mu-equiv},  we get 
\begin{equation} \label{eq:mu-triangle}
 \mu_{t+s}(x,z)\le c'=\max\{\mu_t(x,y),\mu_s(y,z)\}.
\end{equation}

For fixed $x,y$, both $\mu_t(x,y)$ and $q_t(x,y)$ are nonincreasing in $t$. In the non-degenerate case, i.e., $x,y,z$ are pairwise distinct, from \eqref{eq:mu-triangle} we obtain
\begin{align*}
 \mu_{t+s}(x,z)-(t+s)
 & \le \max\{\mu_t(x,y),\mu_s(y,z)\}-(t+s) \\
 &=\max\{\mu_t(x,y)-(t+s),\mu_s(y,z)-(t+s)\}\\
 &\le \max\{\mu_t(x,y)-t,\mu_s(y,z)-s\}.
\end{align*}
Exponentiating by using the monotonicity of $r\mapsto 2^r$ and $2^{\max\{A,B\}}=\max\{2^A,2^B\}$, we obtain
\begin{equation} \label{eq:q-triangle}
 q_{t+s}(x,z)\le\max\{q_t(x,y),q_s(y,z)\}.
\end{equation}
When $x=z$, the left-hand side of \eqref{eq:q-triangle} is zero; when $x=y$ or $y=z$, \eqref{eq:q-triangle} degenerates to the nonincreasing property in $t$. Finally, since $a\mapsto(1+a)^{-1}$ is decreasing on $[0,\infty)$, \eqref{eq:q-triangle} is equivalent to
\[
 \min\{M(x,y,t),M(y,z,s)\}\le M(x,z,t+s),
\]
which is the required triangle inequality.
\end{proof}

\begin{remark} \label{rem:properties}
Notice that the fuzzy metric $M$ defined by \eqref{eq:M} is non-stationary, since $M(x,y,t)$ clearly depends on $t$. Moreover, it is not strong, because the inequality $M(x,z,t)\ge M(x,y,t)*M(y,z,t)$ does not hold for all $x,y,z,t>0$. In particular, $M$ cannot be induced by any standard metric $d$, since the standard fuzzy metric $M_d$ is always strong. This shows that the non-Archimedean assumption in Mihe\c{t}'s theorem is not merely a technical convenience but a structural requirement.
\end{remark}

The following example simultaneously gives negative answers to Problem~1 and Problem~2.

\begin{counterexample} \label{counter:main}
In the fuzzy metric space of Theorem~\ref{thm:fuzzy}, the sequence $\{x_n=n\}_{n=1}^\infty$ is a fuzzy contractive sequence with  $k=1/2$, but it is not a Cauchy sequence.
\end{counterexample}

\begin{proof}
For any $x,y$, we have 
$
 \ell(x+1,y+1)=\ell(x,y)+1.
$
Hence $\mu_t(x+1,y+1)=\mu_t(x,y)-1$. Whether $x$ and $y$ are equal or not, we always obtain
\begin{equation} \label{eq:q-shift}
 q_t(x+1,y+1)=\frac{1}{2}\,q_t(x,y).
\end{equation}
Since $q_t(x,y)=M(x,y,t)^{-1}-1$, for all $n$ and $t>0$ we have
\[
 M(x_{n+1},x_{n+2},t)^{-1}-1
 =\frac{1}{2}\bigl(M(x_n,x_{n+1},t)^{-1}-1\bigr).
\]
Therefore, $\{x_n\}$ is a fuzzy contractive sequence with  $k=1/2$.

However, for any fixed $N$, let $t=1$, $n=N$, $m=2N+1$. Then
\[
 \ell(x_N, x_{2N+1})=N,\quad |x_N- x_{2N+1}|=N+1,\quad
 \mu_1(x_N, x_{2N+1})=\frac{N+1}{2^1-1}-N=1.
\]
Thus $q_1(x_N, x_{2N+1})=1$, and consequently
\begin{equation} \label{eq:M-half}
 M(x_N, x_{2N+1},1)=\frac{1}{2}.
\end{equation}
Taking $\eps=1/2$, since \eqref{eq:M-half} holds for every $N$, the sequence $\{x_n\}$ is not Cauchy.
\end{proof}

\begin{theorem} \label{thm:gs}
A fuzzy contractive sequence need not be a Cauchy sequence. Therefore, the answer to Problem~1~\cite{GS2002} is negative.
\end{theorem}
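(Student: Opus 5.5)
The plan is to obtain Theorem~\ref{thm:gs} as an immediate consequence of Theorem~\ref{thm:fuzzy} and Counterexample~\ref{counter:main}, with no new computation. To answer Problem~1 negatively, it suffices to exhibit one GV-fuzzy metric space and one sequence in it that is GS-contractive in the sense of Definition~\ref{def:gs} but is not Cauchy in the sense of Definition~\ref{def:cauchy}.

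\textbf{The space.} By Theorem~\ref{thm:fuzzy}, $(\Nzero,M,*)$ with $*=\min$ satisfies (GV1)--(GV5). It is therefore a legitimate fuzzy metric space in the sense of George and Veeramani, which is exactly the setting of the Gregori--Sapena question.

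\textbf{GS-contractivity.} I would invoke the first half of Counterexample~\ref{counter:main}. The key step there is the shift identity \eqref{eq:q-shift}, which rests on $\mu_t(x+1,y+1)=\mu_t(x,y)-1$. It gives $q_t(x+1,y+1)=\tfrac12 q_t(x,y)$. Since $q_t=M^{-1}-1$, this shows that $x_n=n$ satisfies the GS inequality with $k=1/2\in(0,1)$, for all $n$ and all $t>0$.

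\textbf{Failure of Cauchyness.} I would invoke the second half of Counterexample~\ref{counter:main}. Fix $\eps=1/2$ and $t=1$. For every $N$, the indices $n=N$ and $m=2N+1$ are both at least $N$, and \eqref{eq:M-half} gives $M(x_N,x_{2N+1},1)=1/2$. This value is not greater than $1-\eps=1/2$. Hence no $N$ works in Definition~\ref{def:cauchy}, and the sequence is not Cauchy.

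\textbf{Conclusion and likely obstacle.} These two facts show that a fuzzy contractive sequence need not be Cauchy, so the answer to Problem~1 is negative. The only point needing care is that Problem~1 is posed for arbitrary GV-fuzzy metric spaces, with no requirement of strongness, stationarity, or a positive lower bound $\bigwedge_t M>0$. A single example in a genuine GV space therefore suffices. That legitimacy is exactly what Theorem~\ref{thm:fuzzy} supplies, so I expect no real obstacle beyond citing it.
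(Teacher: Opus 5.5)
Your proposal is correct and follows the paper's own route: Theorem~\ref{thm:gs} is read off directly from Counterexample~\ref{counter:main}, with Theorem~\ref{thm:fuzzy} certifying that $(\Nzero,M,\min)$ is a genuine GV-fuzzy metric space. The shift identity \eqref{eq:q-shift} gives the GS inequality with $k=1/2$, and for every $N$ the value $M(x_N,x_{2N+1},1)=1/2$ is not greater than $1-\eps$ with $\eps=1/2$, which rules out Cauchyness exactly as you describe.
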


\begin{theorem} \label{thm:psi}
A strictly fuzzy $\psi$-contractive sequence need not be a Cauchy sequence. Consequently, the answer to Problem~2~\cite{GMM2020} is negative.
\end{theorem}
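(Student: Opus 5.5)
We will reuse the space $(\Nzero,M,*)$ of Theorem~\ref{thm:fuzzy} and the sequence $x_n=n$ of Counterexample~\ref{counter:main}. That counterexample already shows the sequence is not Cauchy, since $M(x_N,x_{2N+1},1)=1/2$ for every $N$. So it remains only to exhibit some $\psi\in\Psi$ for which
\[
M(x_{m+1},x_{n+1},t)\ge\psi\bigl(M(x_m,x_n,t)\bigr)
\]
holds for \emph{all} index pairs $m,n$ and all $t>0$, not just adjacent ones.

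The key input is the shift identity \eqref{eq:q-shift}, $q_t(x+1,y+1)=\tfrac12 q_t(x,y)$, which holds for arbitrary $x,y\in\Nzero$ (trivially when $x=y$). Writing $u=M(x_m,x_n,t)$, we have $q_t(x_m,x_n)=u^{-1}-1$. Hence
\[
M(x_{m+1},x_{n+1},t)=\frac{1}{1+\tfrac12(u^{-1}-1)}=\frac{2u}{1+u}.
\]
This suggests taking $\psi(u)=\dfrac{2u}{1+u}$ on $(0,1]$, which makes the contractive inequality an equality for every $m,n,t$.

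We then check that $\psi\in\Psi$:
\begin{itemize}
\item $\psi$ is continuous on $(0,1]$.
\item $\psi$ maps $(0,1]$ into $(0,1]$, since $0<2u\le 1+u$ for $u\in(0,1]$.
\item $\psi$ is nondecreasing, because $\psi'(u)=2/(1+u)^2>0$.
\item $\psi(u)>u$ for $0<u<1$, because $2u>u(1+u)$ is equivalent to $u<1$.
\end{itemize}

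Combining these facts, $\{x_n\}$ is strictly fuzzy $\psi$-contractive but not Cauchy, which answers Problem~2 negatively. There is no real obstacle here. The only point needing care is that \eqref{eq:q-shift} is a pairwise identity, valid for arbitrary pairs and not only consecutive terms; this is exactly what upgrades the GS-type control to the strict, all-pairs condition.
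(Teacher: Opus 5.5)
Your proposal is correct and follows the paper's proof essentially step for step. It uses the same sequence $x_n=n$ and the same $\psi(u)=2u/(1+u)$, read off from the shift identity \eqref{eq:q-shift} so that the strict contractive condition holds with equality for all pairs. The verification that $\psi\in\Psi$ is the same, and non-Cauchyness is likewise inherited from Counterexample~\ref{counter:main}.
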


\begin{proof}
Define
\[
 \psi(u)=\frac{2u}{1+u},\qquad 0<u\le 1.
\]
It is easy to verify that this mapping is continuous, increasing, and maps $(0,1]$ into itself; moreover
\[
 \psi(u)-u=\frac{u(1-u)}{1+u}>0\qquad (0<u<1),
\]
so $\psi\in\Psi$. Again, take the sequence $\{x_n=n\}$. For all $m,n$ and $t>0$, by \eqref{eq:q-shift} we have
\begin{align*}
 M(x_{m+1},x_{n+1},t) &=\frac{1}{1+q_t(x_{m},x_{n})/2}\\
 &=\frac{2}{2+q_t(x_{m},x_{n})}\\
 &=\psi\!\left(\frac{1}{1+q_t(x_{m},x_{n})}\right)\\
 &=\psi(M(x_{m},x_{n},t)).
\end{align*}
Therefore, the sequence $\{x_n=n\}$ is strictly fuzzy $\psi$-contractive. Counterexample~\ref{counter:main} has already shown that this sequence is not Cauchy.
\end{proof}

\begin{remark} \label{rem:equality}
It is worth emphasizing that the sequence $\{x_n=n\}$ satisfies the equality $M(x_{m+1},x_{n+1},t)=\psi(M(x_m,x_n,t))$ for all $m,n\in\mathbb{N}$ and $t>0$, which means that it is strictly fuzzy $\psi$-contractive in the strongest possible sense. Nevertheless, it fails to be Cauchy. This indicates that the breakdown of the contractivity-to-Cauchyness inference cannot be repaired by simply strengthening the contractive inequality to an equality.
\end{remark}

\section{Concluding remarks}

In classical metric spaces, the proof that every contractive sequence is Cauchy relies on the additive accumulation of distances through the triangle inequality. Specifically, if $d(x_{n+1},x_{n+2})\le k d(x_n,x_{n+1})$ with $k\in(0,1)$, then for $m>n$ one has $d(x_n,x_m)\le d(x_n,x_{n+1})+\cdots+d(x_{m-1},x_m)\le k^n(1+k+\cdots)d(x_0,x_1)$, which tends to zero as $n$ grows. In GV-fuzzy metric spaces, the triangle inequality takes the form $M(x,z,t+s)\ge M(x,y,t)*M(y,z,s)$. Even if the adjacent terms $M(x_n,x_{n+1},t)$ tend to $1$, the $t$-norm of finitely many numbers close to $1$ need not be close to $1$ unless additional structural assumptions are present. In our counterexample, taking $t=1$ gives $M(x_N,x_{2N+1},1)=1/2$ for every $N$, which provides an explicit manifestation of this propagation failure.

Mihe\c{t} \cite{Mihet2008} proved that every fuzzy $\psi$-contractive sequence is M-Cauchy in a non-Archimedean fuzzy metric space under a positive t-norm. The key to his success lies in condition (NA), namely $M(x,z,t)\ge M(x,y,t)*M(y,z,t)$, which allows the control of adjacent terms to be propagated directly to arbitrary terms through iterative application of the t-norm at the same parameter $t$. The space constructed in Definition~3.1 does not satisfy (NA). Consequently, this propagation is cut off. This shows that the non-Archimedean assumption is not a technical convenience but a structural prerequisite for restoring the logical chain from contractivity to Cauchyness.

The results of this paper have direct consequences for fixed point theorems in fuzzy metric spaces. The additional Cauchy condition in the fuzzy Banach theorem of Gregori and Sapena \cite{GS2002} cannot be removed. The conditions (a)--(c) in Wardowski's theorem \cite{Wardowski2013} cannot be dropped arbitrarily. The relationship between M-completeness and G-completeness in this context was clarified by Vasuki and Veeramani \cite{VV2003}. Future research should no longer seek weaker contractive conditions to guarantee Cauchyness in general GV-fuzzy metric spaces. Instead, the focus should shift toward identifying which spatial structures, such as non-Archimedean property, strong fuzzy metrics, or the condition $\bigwedge_{t>0}M(x,y,t)>0$, can compensate for the loss of propagation inherent in the fuzzy setting.

We conclude by proposing two open problems arising naturally from our investigation.

\textbf{Problem 3.} Does there exist a strictly fuzzy $\psi$-contractive sequence that is not Cauchy in a GV-fuzzy metric space equipped with the product t-norm or the {\L}ukasiewicz t-norm? In other words, does the choice of the continuous t-norm affect the validity of the statement that strictly fuzzy $\psi$-contractive implies Cauchy?

\textbf{Problem 4.} Is the non-Archimedean condition a necessary structural requirement for the statement that strictly fuzzy $\psi$-contractive implies Cauchy to hold in a GV-fuzzy metric space? That is, does there exist a non-strong GV-fuzzy metric space in which every strictly fuzzy $\psi$-contractive sequence is nevertheless Cauchy?

\section*{Acknowledgements}

The authors would like to thank the reviewers for their valuable comments. This work was supported by the National Natural Science Foundations of China (Grant Nos. 12571489) and Guangxi Natural Science Foundation (No. 2025GXNSFAA069576).

\end{document}